\theoremstyle{plain}
\newtheorem{thm}{Theorem}[section]
\newtheorem{lem}[thm]{Lemma}
\theoremstyle{definition}
\theoremstyle{remark}
\theoremstyle{corollary}
\newtheorem{cor}{Corollary}[section]
\newcommand{\bel}[1]{\begin{equation}\label{#1}}
\newcommand{\be}{\begin{equation}}
\newcommand{\ba}{\begin{eqnarray}}
\newcommand{\ea}{\end{eqnarray}}
\newcommand{\qe}{\end{equation}}
\begin{document}
\setcounter{Maxaffil}{2}
\title{On Extension of Regular Graphs}
\author[1,2]{\rm Anirban Banerjee}
\author[1]{\rm Saptarshi Bej}
\affil[1]{Department of Mathematics and Statistics}
\affil[2]{Department of Biological Sciences}
\affil[ ]{Indian Institute of Science Education and Research Kolkata}
\affil[ ]{Mohanpur-741246, India}
\affil[ ]{\textit {\{anirban.banerjee, sayan7299\}@iiserkol.ac.in}}

\maketitle

\begin{abstract}
In this article, we  discuss when one can extend an $r$-regular graph to an $r+1$ regular by  adding edges.  Different conditions on the number of vertices $n$ and regularity $r$ are developed.
We  derive an upper bound of $r$, depending on $n$, for which, every regular graph $G(n,r)$ can be extended to an $r+1$-regular graph with $n$ vertices.
 Presence of  induced complete bipartite subgraph and complete subgraph is discussed, separately, for the extension of regularity.
\end{abstract} 

\textbf{AMS classification: }[05C07; 05C40; 05C70; 05C75]\\
\textbf{Keywards:} Regular graph; Matching on regular graph; Bipartite regular graph; Extension of regularity.

\section{Introduction}
 A graph $G(n,r)$ with $n$ vertices is called regular of degree $r$ if each of its vertices has the degree $r$. 
 Many studies have been done on the regular graphs till date. Not only, these graphs are interesting in network theory \cite{Bullmore EtAl}, but also, they are quite fascinating geometrically \cite{Pach}.
Several graphs like Moore graph, Cage graph, Petersen graph, etc.~use the concept of regular graphs \cite{Biggs}. 
Coloring on regular graphs is also well studied \cite{Galvin}. Presence of Hamiltonian cycle in random regular graphs has been explored by Fenner and Frieze \cite{Fenner EtAl}.
B\'ela Bollob\'as has  extensively studied several properties of random regular graphs \cite{Bollobas}. 
Matchings in random regular graphs has  been explored in \cite{Petros}. 
Hall has provided matchings on bipartite regular graphs \cite{Bondy EtAl}. A general condition on the existence of matching in any graph has been introduced by Tutte \cite{Bondy EtAl}. Petersen has used it to find some interesting results on 3-regular graphs \cite{Bondy EtAl}. 
Spectral property of regular graphs is a great interest for many researchers \cite{Cvetkovic EtAl,Koledin EtAl}. 
The eigenvalues of the adjacency matrix have been used to study matching on regular graphs \cite{Koledin EtAl,Sebastian EtAl}.

The exact enumeration of non-isomorphic regular graphs with a given number of vertices and regularity is combinatorially hard to figure out.  It could be estimated for some lower number of vertices and regularities. The generalization or any closed formula have not been established. One may observe that there is a certain relationship between the number of non-isomorphic regular graphs with  a given number of vertices $n$ and  regularity $r$. It can  also be seen that there is an order relationship between the numbers of non-isomorphic regular graphs $G(n, r)$ and $G(n, r+1)$, respectively. If we increase $r$ from $0$ to $n-1$, the number of regular graphs up to isomorphism increases until $r$ reaches, approximately, the value $n/2$, then the number of regular graphs  gradually decreases. 
Now, one may think if there is any recursive relation between the number of non-isomorphic regular graphs and a  given number of vertices, and regularity. This motivates us to study when a regular graph can be extended to another regular graph of higher regularity by adding edges. In this article, we explore the situations when one can or can not extend the graph $G(n,r)$ to $G(n,r+1)$ by adding $n/2$ edges.
\begin{lem}[Dirac's theorem]
\label{intro:lem1}
 If a connected graph $G$ has $n \geq 3$ vertices and the degree of each vertex is at least $n/2$, then $G$ is Hamiltonian.
\end{lem}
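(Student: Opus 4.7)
The plan is to prove Dirac's theorem by the classical \emph{longest path} argument, proceeding by contradiction. First I would assume $G$ is not Hamiltonian and choose a path $P = v_1 v_2 \cdots v_k$ of maximum length in $G$. By maximality, every neighbor of $v_1$ must already appear on $P$ (otherwise one could prepend that neighbor to obtain a longer path), and symmetrically every neighbor of $v_k$ lies on $P$. In particular, $k-1 \geq \deg(v_1) \geq n/2$ and $k-1 \geq \deg(v_k) \geq n/2$.

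The main combinatorial step is to locate a cycle on all $k$ vertices of $P$. I would define the two index sets
\[
S = \{\, i \in \{1,\dots,k-1\} : v_1 v_{i+1} \in E(G)\,\}, \qquad
T = \{\, i \in \{1,\dots,k-1\} : v_k v_i \in E(G)\,\},
\]
so that $|S| = \deg(v_1) \geq n/2$ and $|T| = \deg(v_k) \geq n/2$. Since both $S$ and $T$ sit inside $\{1,\dots,k-1\}$ and $k \leq n$, a pigeonhole estimate gives $|S \cap T| \geq |S|+|T|-(k-1) \geq n-k+1 \geq 1$. Thus there exists an index $i$ such that $v_k v_i$ and $v_1 v_{i+1}$ are both edges. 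Splicing these edges into $P$ yields the cycle
\[
C : v_1\, v_2\, \cdots\, v_i\, v_k\, v_{k-1}\, \cdots\, v_{i+1}\, v_1,
\]
which visits precisely the vertices of $P$.

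Finally I would show $k = n$. If some vertex $w$ lay outside $C$, then by connectedness of $G$ there is a shortest path from $V(G) \setminus V(C)$ to $V(C)$; its last edge provides some $w' \notin V(C)$ adjacent to a vertex $v_j \in V(C)$. Cutting $C$ at $v_j$ and prepending the edge $w' v_j$ produces a path on $k+1$ vertices, contradicting the maximality of $P$. Hence $V(C) = V(G)$ and $C$ is a Hamiltonian cycle.

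The step I expect to be the most delicate is the pigeonhole argument locating $i \in S \cap T$: one must be careful that the indexing conventions are set up so that a common index really does correspond to the two edges needed to close the cycle around $P$, and that the bounds $|S|,|T| \geq n/2$ together with $k \leq n$ are combined correctly. The extension-by-connectedness step is essentially routine once the cycle $C$ is in hand.
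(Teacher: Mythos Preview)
Your argument is the standard longest-path proof of Dirac's theorem and is correct as written; the indexing in the definition of $S$ and $T$ is set up properly so that a common index $i$ yields exactly the crossing edges $v_1v_{i+1}$ and $v_kv_i$ needed to close up the cycle on $V(P)$, and the connectedness step to rule out $k<n$ is fine.

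There is nothing to compare against, however: the paper does not supply its own proof of this lemma. Dirac's theorem is simply quoted as a classical result and then invoked in the proof of the theorem that immediately follows it. So your proposal goes beyond what the paper does, providing a complete (and standard) proof where the paper only cites the statement.
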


\begin{thm}
If $G(n,r)$ is a regular graph with $r < n/2$ and $n$ is even, then  $G(n,r)$ can always be extended to $G(n,r+1)$.
\end{thm}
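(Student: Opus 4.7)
The plan is to work in the complement graph $\bar{G}$ and extract a perfect matching from it. The edges added to $G$ will be the edges of this matching, so that each vertex's degree increases by exactly one.

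First I would observe that $\bar{G}$ is $(n-1-r)$-regular. Since $r < n/2$ and both $n$ and $r$ are integers with $n$ even, we have $r \leq n/2 - 1$, hence every vertex of $\bar{G}$ has degree at least $n - 1 - (n/2 - 1) = n/2$. Assuming $n \geq 4$ (the case $n = 2$, forcing $r = 0$, is trivial: add the only missing edge), Dirac's theorem (Lemma~\ref{intro:lem1}) applies to $\bar{G}$, provided $\bar{G}$ is connected; but the minimum degree condition $\delta(\bar{G}) \geq n/2$ already guarantees connectivity, so we may invoke Dirac's theorem to obtain a Hamiltonian cycle $C$ in $\bar{G}$.

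Next, because $n$ is even, the Hamiltonian cycle $C$ has even length and therefore splits into two edge-disjoint perfect matchings of size $n/2$ by taking alternate edges. Let $M$ be one of these matchings. Since $M \subseteq E(\bar{G})$, none of the edges of $M$ lie in $G$, so we may add $M$ to $G$ without creating multi-edges. Every vertex of $G$ is incident to exactly one edge of $M$, so each degree rises from $r$ to $r+1$, yielding the desired $(r+1)$-regular graph on the same $n$ vertices.

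The only subtle points are (i) checking the degree bound $n - 1 - r \geq n/2$ uses the integrality of $r$ together with the hypothesis $r < n/2$, and (ii) ensuring Dirac's theorem is applicable, which requires $n \geq 3$; the small-$n$ boundary case is handled directly. Everything else is routine: the main idea is simply \emph{extract a perfect matching from a Hamiltonian cycle in the complement}, and the parity of $n$ is precisely what lets a Hamiltonian cycle decompose into perfect matchings. I do not foresee a genuine obstacle; the proof is short once one thinks to pass to the complement.
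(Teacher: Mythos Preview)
Your proposal is correct and follows essentially the same approach as the paper: pass to the complement, observe that the minimum-degree hypothesis of Dirac's theorem is satisfied, obtain a Hamiltonian cycle, and use the parity of $n$ to extract a perfect matching whose edges are added back to $G$. Your write-up is in fact more careful than the paper's (you explicitly address connectivity, the boundary case $n=2$, and the step from Hamiltonian cycle to perfect matching), but the underlying idea is identical.
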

\label{intro:thm1}
\begin{proof}
 $G^{c}$ is a regular graph where the degree of each vertex is $(n-1-r)$. 
Since  $r <n/2 $, 
$ (n-1-r) \geq n/2$.
 Thus, there exists a Hamiltonian even-cycle in $G^{c}$ (by lemma (\ref{intro:lem1})). 
 Hence the proof.
\end{proof}

As we  see, when $n$ is even, for any $r < n/2$  $G(n,r)$ can be extended to $G(n,r+1)$, which is  not  always true for $r \ge n/2$. 
Hence, $n/2$ can be considered as an upper bound for $r$. In the next section, we attempt to find a better upper bound (say $ \mathbb{K} > n/2$) for $r$, such that, all $G(n, r)$ where $n$ is even and $r \le \mathbb{K} $ can be extended to $G(n,r+1)$. For $n, r$ are even,  we estimate the value of $\mathbb{K} $. 

\section{Maximum value of $r$ for  the extension of  $G(n, r)$}

\begin{lem}
\label{tutte}
(Tutte's Theorem). A graph $G$ has a perfect matching if and only if $odd(G-S) \leq |S|$ for all $S \subset V$, where $odd(G-S)$ represents the number of odd components in $G-S$. \cite{Bondy EtAl}
\end{lem}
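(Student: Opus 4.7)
The plan is to prove both directions of this biconditional. Necessity is a short counting argument; sufficiency is the substantive direction and will be attacked by contradiction together with edge-maximality, following Tutte's original strategy. I will not grind through the delicate alternating-path analysis at the end, but will indicate its role.

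For necessity, suppose $G$ has a perfect matching $M$ and fix any $S \subset V$. For each odd component $C$ of $G - S$, the restriction of $M$ to $V(C)$ saturates an even number of vertices, so at least one vertex of $C$ must be matched by $M$ to a vertex outside $V(C)$. Since there are no edges between distinct components of $G - S$, that partner necessarily lies in $S$, and distinct odd components contribute distinct partners. Hence $odd(G - S) \le |S|$.

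For sufficiency, assume $G$ satisfies Tutte's condition but has no perfect matching; taking $S = \emptyset$ forces $|V(G)|$ to be even. I would add edges greedily, keeping the ``no perfect matching'' property, until reaching an edge-maximal such graph $G^{*}$ on the same vertex set. Tutte's condition persists under edge addition, because merging two components of $G - S$ can never raise $odd(G - S)$. Let $U \subseteq V(G^{*})$ be the set of vertices of degree $n-1$ in $G^{*}$. The pivotal claim is that every component of $G^{*} - U$ is a complete graph. Granting this, Tutte's condition applied with $S = U$ lets me match one vertex of each odd component of $G^{*} - U$ to a distinct vertex of $U$, then pair up the remaining vertices within each (now even) clique component and within the unused part of $U$ (itself a clique). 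This yields a perfect matching of $G^{*}$, contradicting the choice of $G^{*}$.

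The main obstacle is the pivotal claim that the components of $G^{*} - U$ are all cliques. The standard argument assumes, for contradiction, that some component contains an induced path $a$--$b$--$c$ with $ac \notin E(G^{*})$; since $b \notin U$, there is also some $d$ with $bd \notin E(G^{*})$. Edge-maximality produces perfect matchings $M_{1}$ of $G^{*} + ac$ (necessarily using $ac$) and $M_{2}$ of $G^{*} + bd$ (necessarily using $bd$). The symmetric difference $M_{1} \triangle M_{2}$ is a disjoint union of paths and even cycles, and a case analysis on whether $ac$ and $bd$ lie in the same or different components allows one to splice together portions of $M_{1}$ and $M_{2}$, possibly using one of the edges $ab$ or $bc$, to obtain a perfect matching of $G^{*}$ itself. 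This alternating-structure argument is the technical heart of the proof and is where the most attention would be needed.
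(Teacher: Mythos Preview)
The paper does not prove this lemma at all: it is quoted as a classical result with a citation to Bondy and Murty, and no proof is given. Your sketch is the standard Lov\'asz edge-maximality argument (as found, for instance, in the very reference the paper cites), and the outline is correct, including the claim that components of $G^{*}-U$ are cliques and the $M_{1}\triangle M_{2}$ splicing to establish it. Since the paper treats Tutte's theorem as a black box, there is no comparison to draw; your proposal simply supplies a proof where the paper chose to omit one.
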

A \textit{balloon}, in a graph, is a maximal $2$-edge-connected subgraph incident to exactly one cut edge of that graph  \cite{Suil EtAl}.
Note that, in an $r$-regular graph, where $r$ is odd, the minimum number of vertices in a balloon is $r+2$ (see construction($2.1$) in \cite{Suil EtAl}). Let us denote the number of balloons in a graph $G$ by $b(G)$.

\begin{lem}\label{Ch4:L0}
 \cite{Suil EtAl} Let $G(n, r)$ be a regular graph, and let $S$ be a subset of $V(G)$. If the number of edges from each odd component of $G-S$ to $S$ is only $1$ or at least $r$ then $odd(G-S)-|S| \leq \frac{r}{r-1}b(G)$, where $odd(G-S)$ is the number of connected components in $G-S$ having odd number of vertices.
\end{lem}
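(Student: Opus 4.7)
The plan is to classify the odd components of $G-S$ by how they connect to $S$, use a degree count to bound $odd(G-S)-|S|$ in terms of the number of components with only one edge to $S$, and then show that every such component hides a distinct balloon of $G$.

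First I would write $odd(G-S)=a+b$, where $a$ counts the odd components sending exactly one edge to $S$ (call these type A) and $b$ counts those sending at least $r$ edges (type B). Summing the degrees over $S$ in $G$ gives $r|S|\geq a+rb$, since every vertex of $S$ has degree $r$ in $G$ and the edges from $S$ to the odd components alone contribute at least $a+rb$. Rearranging yields $b\leq |S|-a/r$, so
\[
odd(G-S)-|S| \;=\; a+b-|S| \;\leq\; \frac{(r-1)a}{r}.
\]

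The main step, and what I expect to be the real obstacle, is showing that $a\leq b(G)$, i.e.\ each type A component $C$ contains at least one balloon of $G$. Let $e=uv$ be the unique edge between $C$ and $S$, with $v\in C$. Since $e$ is the only edge joining $V(C)$ to $V(G)\setminus V(C)$, the edge $e$ is a cut edge of $G$, and moreover every bridge of $G[V(C)]$ remains a bridge of $G$ (removing it still separates some piece of $C$ from the rest of $G$, which can only reach the outside through $e$). Likewise, a maximal $2$-edge-connected subgraph of $G[V(C)]$ is still maximal in $G$, since $u$ is joined to $V(C)$ by the single edge $e$. Now consider the bridge-block tree $T$ of $G[V(C)]$. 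If $T$ has one node, then $G[V(C)]$ itself is $2$-edge-connected and is incident to the unique cut edge $e$, hence a balloon. Otherwise $T$ has at least two leaves; at most one of them can contain $v$, so some leaf $L$ avoids $v$, and $L$ is a maximal $2$-edge-connected subgraph of $G$ incident in $G$ to exactly one cut edge (the single bridge of $T$ at $L$), i.e.\ a balloon. Balloons arising from distinct type A components are distinct because the components have disjoint vertex sets, so $a\leq b(G)$.

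Combining the two bounds and using $(r-1)^2\leq r^2$, which gives $(r-1)/r\leq r/(r-1)$ for $r\geq 2$, we conclude
\[
odd(G-S)-|S| \;\leq\; \frac{(r-1)a}{r} \;\leq\; \frac{(r-1)b(G)}{r} \;\leq\; \frac{r}{r-1}\,b(G),
\]
as claimed. The degree count and the final comparison are routine; the content of the proof is really the balloon-extraction argument in type A components, for which the bridge-block tree of $G[V(C)]$ is the right tool.
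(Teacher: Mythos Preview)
The paper does not supply its own proof of this lemma; it is quoted as a result from the cited reference of Suil~O and West, so there is nothing in the paper to compare your argument against directly.

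That said, your proof is correct. The degree count $r|S|\ge a+rb$ and the resulting inequality $odd(G-S)-|S|\le \frac{r-1}{r}a$ are exactly right, and the balloon-extraction step is handled properly: the unique edge $e$ from a type~A component $C$ to $S$ is a cut edge of $G$, bridges and maximal $2$-edge-connected subgraphs of $G[V(C)]$ remain such in $G$, and a leaf of the bridge tree of $G[V(C)]$ avoiding $v$ is then incident to exactly one cut edge of $G$. Distinct type~A components yield vertex-disjoint balloons, giving $a\le b(G)$. This is essentially the argument in the cited source.

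One remark worth making: your argument actually produces the sharper bound
\[
odd(G-S)-|S|\;\le\;\frac{r-1}{r}\,b(G),
\]
and this is precisely the form the paper itself uses downstream in the proof of Lemma~\ref{Ch4:L2} (Case~1.1), where it writes $odd(G-S)-|S|\le \frac{r-1}{r}b(G)<2$. The constant $\frac{r}{r-1}$ in the lemma statement appears to be a typographical slip for $\frac{r-1}{r}$; your final inequality $(r-1)/r\le r/(r-1)$ is valid but unnecessary.
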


\begin{lem}
\label{Ch4:L1}
For any real value $r>15$ and $k$,
$(k+2)r-k^{2}+2 > 3r+7,$
if $2 \le k \le r-2$.
\end{lem}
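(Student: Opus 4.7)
The inequality is purely algebraic, so the plan is to isolate $r$ and then reduce to checking the worst value of $k$ in the given range.

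First I would rewrite the claim as follows: $(k+2)r - k^2 + 2 > 3r + 7$ is equivalent to $(k-1)r > k^2 + 5$, and since $k \ge 2$ the factor $k-1$ is positive, so the inequality is equivalent to
\[
r \;>\; \frac{k^2+5}{k-1}.
\]
Thus the lemma reduces to showing that the function $f(k) = \frac{k^2+5}{k-1}$ satisfies $f(k) < r$ for every $k$ with $2 \le k \le r-2$, assuming $r>15$.

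The next step is to locate the maximum of $f$ on the interval $[2,r-2]$. A direct differentiation gives
\[
f'(k) \;=\; \frac{k^2 - 2k - 5}{(k-1)^2},
\]
which vanishes at $k = 1+\sqrt{6} \approx 3.449$, is negative to its left and positive to its right. Hence $f$ first decreases and then increases on $[2,r-2]$, so its maximum on this interval is attained at one of the two endpoints $k=2$ or $k=r-2$. It therefore suffices to verify the desired inequality at these two endpoints.

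At $k=2$ we get $f(2) = 9$, and since $r > 15 > 9$ the inequality $r > f(2)$ holds. At $k = r-2$ the polynomial division $r^2 - 4r + 9 = (r-3)(r-1) + 6$ yields
\[
f(r-2) \;=\; (r-1) + \frac{6}{r-3},
\]
so the inequality $r > f(r-2)$ becomes $1 > \frac{6}{r-3}$, i.e. $r > 9$, which again follows from $r > 15$. Combining the two endpoint checks with the monotonicity analysis completes the proof. No real obstacle is expected; the only mildly delicate point is justifying that the maximum of $f$ on $[2,r-2]$ is at an endpoint rather than in the interior, which is settled by the sign analysis of $f'$ above.
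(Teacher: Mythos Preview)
Your argument is correct. The rearrangement $(k-1)r > k^2+5$, the computation of $f'$, the location of the unique critical point at $k=1+\sqrt{6}$, and the two endpoint checks (both reducing to $r>9$) are all accurate, and together they give the claim.

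The paper's proof takes a different, though equally elementary, route: it treats $f(k)=(k+2)r-k^{2}+2-(3r+7)=-k^{2}+rk-(r+5)$ as a downward-opening parabola in $k$, computes its two roots $\tfrac{r\pm\sqrt{r^{2}-4(r+5)}}{2}$, and checks that for $r>15$ the whole interval $[2,r-2]$ lies strictly between them, so that $f(k)>0$ there. In effect the paper works directly with the quadratic shape, avoiding any differentiation, while you pass to the rational function $k\mapsto (k^{2}+5)/(k-1)$ and use a calculus argument to push the maximum to the endpoints. Both approaches ultimately hinge on the same two boundary inequalities (each equivalent to $r>9$); the paper's version is marginally shorter, while yours makes the role of the endpoints more explicit.
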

\begin{proof}
For any  value of $r > 15$, the equation $f(k) = (k+2)r-k^{2}+2-(3r+7)$ is a parabola. Now, $f(k) > 0$ between  $\frac{r- \sqrt {r^{2}-4(r+5)}}{2}$ and $\frac{r+ \sqrt {r^{2}-4(r+5)}}{2}$, which are two distinct real roots of the equation $f(k)=0$.
Since $r>15$, 
$\frac{r- \sqrt {r^{2}-4(r+5)}}{2} < 2 \le k \le r-2 <  \frac{r+ \sqrt {r^{2}-4(r+5)}}{2}.$
Thus, for $r>15$ and $2 \le k \le r-2$, $(k+2)r-k^{2}+2 > 3r+7$.
\end{proof}

\begin{lem}
\label{ineq2}
If for any real vale $x$ and $r$, $1 \leq x \leq r$, then $x(r-x+1) \geq r$.
\end{lem}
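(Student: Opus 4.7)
The plan is to reduce the inequality $x(r-x+1) \geq r$ to an obviously nonnegative product by a one-line algebraic manipulation. Moving $r$ to the left side and expanding, I would compute
\[
x(r-x+1) - r = -x^{2} + (r+1)x - r,
\]
and then factor this quadratic as $(x-1)(r-x)$. Since the hypothesis $1 \leq x \leq r$ makes both factors nonnegative, their product is nonnegative, which is exactly the desired inequality.

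If one prefers a less computational route, the function $f(x) = x(r-x+1)$ is a downward-opening parabola in $x$ with $f(1) = f(r) = r$; concavity on the interval $[1,r]$ then forces $f(x) \geq r$ throughout. Either viewpoint yields the conclusion immediately.

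I do not anticipate any real obstacle here: the only substantive step is spotting the factorization $(x-1)(r-x)$ (or, equivalently, checking the two boundary values and invoking concavity). The edge cases $x=1$ and $x=r$ give equality, consistent with both factors vanishing at the endpoints.
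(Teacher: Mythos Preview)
Your proposal is correct and is essentially the paper's own argument: the paper defines $f(x)=x(r-x+1)-r$, observes that its roots are $1$ and $r$ and that its leading coefficient is negative, and concludes $f(x)\geq 0$ on $[1,r]$. Your explicit factorization $f(x)=(x-1)(r-x)$ is just a more transparent way of saying the same thing.
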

\begin{proof}
Let $f(x)=x(r-x+1)-r$. 
Since both the roots of  $f(x)$, which are $1$ and $r$, are real and the coefficient of $x$ in $f(x)$ is negative, thus 
$f(x) \geq 0$ when $1 \leq x \leq r$. 
\end{proof}

\begin{lem}
\label{Ch4:L2}
Any regular graph $G(n,r)$  has a perfect matching if $r>15$ is odd, $n$ is even and $n<3r+7$.
\end{lem}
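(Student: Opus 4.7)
The plan is to argue by contradiction via Tutte's theorem (Lemma \ref{tutte}). Suppose $G(n,r)$ has no perfect matching, so there is $S \subseteq V(G)$ with $q := odd(G-S) > |S| =: t$. A parity argument (summing $|C|$ over all components of $G-S$, so that even components contribute evenly and odd components contribute $q \pmod 2$) shows $q - t \equiv n \pmod{2}$, and since $n$ is even this forces $q \geq t + 2$.

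Next I classify the odd components of $G-S$ by size. For any odd component $C$, the number of edges from $C$ to $S$ is $e(C,S) = r|C| - 2\,e(C) \geq r|C| - |C|(|C|-1) = |C|(r - |C| + 1)$, so by Lemma \ref{ineq2} every odd component with $1 \leq |C| \leq r$ sends at least $r$ edges to $S$. Writing $q_S$ for the number of such ``small'' components and $q_L$ for the number with $|C| \geq r+2$ (the value $|C|=r+1$ being excluded because $|C|$ and $r$ are both odd), and bounding the total edge count from $G-S$ to $S$ by $rt$, I obtain the two inequalities
\[
r\, q_S + q_L \;\leq\; rt, \qquad q_S + q_L \;\geq\; t + 2.
\]

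The heart of the argument is to rule out $q_L \leq 2$. If $q_L = 0$, the two inequalities give $q_S \geq t+2$ and $q_S \leq t$, a contradiction. If $q_L = 1$, the edge inequality gives $q_S \leq t - 1/r$, hence (integrality) $q_S \leq t-1$, contradicting $q_S \geq t+1$. If $q_L = 2$, it gives $q_S \leq t - 2/r < t$ (using $r > 2$), hence $q_S \leq t-1$, contradicting $q_S \geq t$. So $q_L \geq 3$. Counting vertices then yields
\[
n \;\geq\; t + (r+2)\,q_L + q_S \;\geq\; 3(r+2) \;=\; 3r+6,
\]
and since $n$ is even while $3r+6$ is odd (because $r$ is odd), this tightens to $n \geq 3r+7$, contradicting $n < 3r+7$.

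The main obstacle is handling $q_L = 2$: the vertex count $n \geq 2(r+2)$ alone is too weak, so one must combine it with the edge inequality and exploit $2/r < 1$ to knock the integer $q_S$ down by one. A secondary delicate point is the parity bump from $3r+6$ to $3r+7$ at the end; without it the hypothesis would have to read $n \leq 3r+5$. I notice this route does not obviously use Lemma \ref{Ch4:L0} or Lemma \ref{Ch4:L1}, and it works for all odd $r > 2$; the authors' proof may instead route through the balloon machinery of Lemma \ref{Ch4:L0}, with Lemma \ref{Ch4:L1} used to rule out odd components sending $k \in [3, r-2]$ edges to $S$ via a vertex-count contradiction, which would explain the hypothesis $r > 15$.
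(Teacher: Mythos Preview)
Your argument is correct and substantially cleaner than the paper's. One small point you leave implicit: in the inequality $r\,q_S + q_L \leq rt$ you are using that every large odd component sends at least one edge to $S$. This holds because $e(C,S) = r|C| - 2e(C)$ is odd (both $r$ and $|C|$ are odd) and nonnegative, hence $\geq 1$; you invoke the same parity to exclude $|C|=r+1$, so you have the ingredient but should state it here too. With that, your three-case analysis on $q_L$ and the final vertex count (plus the parity bump $3r+6 \to 3r+7$) go through, and in fact you only need $r$ odd and $r\geq 3$.

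The paper takes a quite different route. It splits on whether $G-S$ has isolated vertices; when it does and all odd components are small it invokes the balloon bound of Lemma~\ref{Ch4:L0} (using $n<3r+7$ to force $b(G)\leq 2$), and when there is no isolated vertex it bounds $|S|=k$ and uses the quadratic inequality of Lemma~\ref{Ch4:L1} to force $k=1$, then counts vertices. The hypothesis $r>15$ enters precisely through Lemma~\ref{Ch4:L1}. Your approach bypasses both auxiliary lemmas by working directly with the aggregate edge inequality and the small/large dichotomy, which is why you get the result for all odd $r>2$; the paper's machinery is heavier but is tailored to the tools it has already set up.
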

\begin{proof}
We prove this result by contradiction. 
Let us assume that $G(n,r)$ has no perfect matching. Now, by Tutte's theorem, there exists a set $S \subset V(G)$ such that $|S|<odd(G-S)$. 
Note that, $|S|\neq 0$, since, if $|S|=0$, then, there  exists at least one odd component (a component having odd number of vertices) in $G$, but, as $r$ is odd, such a component can not exist in $G$. Hence, $|S| \geq 1$.
Let $|S|=k$. Now, if $k$ is odd, then $odd(G-S)$ is odd, and if $k$ is even, $odd(G-S)$ must be even. Since, $G(n,r)$ has no perfect matching,  $odd(G-S) \geq k+2$, i.e., $odd(G-S) \geq 3$.\\

\textbf{Case 1: When there is at least one isolated vertex  in $G-S$.}\\
Let the number of isolated vertices in $G-S$ be $x$.
Since, $x>0$, it is evident that 
\bel{Ch4:L2:e2.9}
|S|=k \geq r. 
\qe
Now, 
\bel{Ch4:L2:e3}
odd(G-S) \geq k+2 \geq r+2.
\qe
Since, $n \le 3r+5$, there must be at least $2r+5$ vertices in $S^{c}$. 
Hence, the number of odd components, in $G-S$, having more than one vertex is less than $(2r+5-x)/3$.
Thus, $odd(G-S) < (2r+5+2x)/3$. Using equation (\ref{Ch4:L2:e3}), we get 
$(2r+5+2x)/3 >r+2$, i.e., 
\bel{Ch4:L2:e4}
x> r/2.
\qe

\textbf{Case 1.1: All odd components are having the number of vertices $\leq r$.}\\
The minimum size of a balloon in an r-regular graph, where r is odd,  is $r+2$. Hence, $b(G) \le 2$ (since, to construct 3 balloons, the minimum number of vertices required is $3r+6$, but $n \leq 3r+5$). 
 Now, if $y$ be the number of vertices in an odd component, then, the minimum number of edges from that component to $S$ must be at least $y(r-y+1)\geq r$  (by lemma (\ref{ineq2})). 
Since, $b(G) \leq 2$, using lemma (\ref{Ch4:L0}) we get, $odd(G-S)-|S| \leq \frac{r-1}{r}b(G)<2$, i.e., 
 $odd(G-S)<|S|+2.$ This contradicts the inequality (\ref{Ch4:L2:e3}). Hence, the case (1.1) is impossible to arise.\\

\textbf{Case 1.2: There is an odd component that has more than $r$ vertices.}\\
Note that, there can be exactly one odd component which has more than $r$ vertices, i.e.,  at least $r+2$ vertices.
 If there are such two, then, the minimum number of vertices in $G$ becomes
 $2(r+2) + r/2 + r$ (by equation (\ref{Ch4:L2:e4}) and (\ref{Ch4:L2:e2.9})), which is  $> 3r+5$, and this is not possible, since $n \le 3r+5$.
Now, if there is an odd component with at least $r+2$ vertices, $odd(G-S)$ must be $r+2$ and $|S|=r$ (since $odd(G-S)\geq |S|+2$ and $n \le 3r+5$).  This implies $x = r$, since, $x$ can not be greater than $|S|$.
Now, since, $|S|=r$ and $x = r$, there must exist another odd component with 3 vertices and which can not be connected to $S$ in $G$, but since, $G$ is an $r$(odd)-regular graph, $G$ can not have any isolated odd component. This leads a contradiction. Thus,  the case (1.2) is  also impossible to arise. Hence, there can not be any isolated vertex in $G-S$.\\

\textbf{Case 2: When there is no isolated vertex  in $G-S$.}\\
Here, the minimum possible degree of a vertex in an odd component of $G-S$ must be $r-k$. Thus, the minimum number of vertices in that component becomes $(r-k)+1$. Since, $odd(G-S) \geq k+2$, the minimum number of vertices in $G$ must be $k+(k+2)(r-k+1)$. Hence, 
\bel{Ch4:L2:e1}
3r+7 > n \ge (k+2)r-k^{2}+2.
\qe
Now, $odd(G-S) \le (3r+7-k)/3$, since here, the minimum number of vertices in an odd component is $3$. Thus, $k<(3r+7-k)/3$, since, $|S|<odd(G-S)$. This implies  $k<(3r+7)/4$. 
Note that, for  $r>15$, $(3r+7)/4 < r-2$. Thus, 
\bel{Ch4:L2:e2}
1 \le k<r-2,\text{ when }r>15.
\qe
Now the equation (\ref{Ch4:L2:e1}), (\ref{Ch4:L2:e2}) and lemma (\ref{Ch4:L1}) (which shows that if $r>15$ and  $2 \leq k \leq r-2$, then $(k+2)r-k^{2}+2 > 3r+7$) claim that 
$k=1.$
This implies that the minimum possible degree of a vertex  and the minimum possible number of vertices in an odd component are $r-1$ and $r$, respectively. 
Now, there must be a vertex in the odd component with the degree $r$, otherwise, if all the vertices in the odd component have degree $r-1$, then the odd component must have $r$ number of vertices and all of them must be connected to the single vertex in $S$ in $G$. Hence, $odd(G-S)=1$ which is a contradiction, since,  $odd(G-S) \geq 3$.
So, there must be at least one vertex, in an odd component, of  degree $r$. Thus, there are at least $r+2$ vertices in an odd component.
Now, since, $odd(G-S) \geq 3$, the minimum number of vertices present in $G$ is $3(r+2)+1 = 3r+7$, which is a contradiction, because $n<3n+7$. 

Hence, our assumption, $G(n,r)$ has no perfect matching, is wrong.
\end{proof}

\begin{cor}
\label{Ch4:C1}
Let $G(n,r)$ be a regular graph such that $n$ is even, $r$ $(>15)$ is odd, and $r \geq n/4$. If $G(n,r)$ is not connected, then $G(n,r)$ has a complete matching.
\end{cor}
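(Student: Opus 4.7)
The plan is to reduce the disconnected case to Lemma \ref{Ch4:L2} by applying it componentwise. Since $G(n,r)$ is $r$-regular and $r$ is odd, the handshake lemma applied to each connected component forces the number of vertices in every component to be even. Moreover, any $r$-regular graph has at least $r+1$ vertices, so each component $C$ of $G$ satisfies $|V(C)| \ge r+1$, and in fact $|V(C)| \ge r+1$ with $|V(C)|$ even.

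Next I would bound the number $k$ of connected components. From $r \ge n/4$ we get $n \le 4r$, and since $n \ge k(r+1)$, we obtain
\[
k \le \frac{4r}{r+1} < 4 \qquad \text{for } r > 15,
\]
so $k \in \{2,3\}$. In either case, for any component $C$ of $G$, the remaining components contribute at least $r+1$ vertices (since $k \ge 2$), hence
\[
|V(C)| \le n - (r+1) \le 4r - (r+1) = 3r - 1 < 3r + 7.
\]

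Now each component $C$ is an $r$-regular graph on an even number of vertices strictly less than $3r+7$ with $r>15$ odd, so Lemma \ref{Ch4:L2} applies and yields a perfect matching on $V(C)$. Taking the union of these perfect matchings over all components produces a perfect matching of $G(n,r)$, which is the desired complete matching.

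The only real subtlety is verifying that every component simultaneously satisfies the hypotheses of Lemma \ref{Ch4:L2}, and in particular the upper bound $|V(C)| < 3r+7$; this is where the assumption $r \ge n/4$ is used, since it forces $n \le 4r$ and hence bounds both the number of components and the size of each one. The parity condition $|V(C)|$ even follows automatically from $r$ being odd, so no extra case analysis is needed.
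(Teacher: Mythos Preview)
Your proof is correct and follows the same approach as the paper: apply Lemma~\ref{Ch4:L2} to each component and take the union of the resulting matchings. The paper's argument is terser---it simply asserts that every component has at least $r+1$ and fewer than $3r+7$ vertices---while you spell out the justification (using $r\ge n/4$ to get $n\le 4r$, disconnectedness to subtract at least $r+1$, and the handshake lemma to ensure each component has even order); your explicit bound on the number of components is not strictly needed, but the rest fills in details the paper leaves implicit.
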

\begin{proof}
Since $G$ is $r$-regular, each component of $G$ has at least $r+1$, but, less than $3r+7$ vertices.
Thus, there exists a perfect matching in every component of $G$ (using lemma(\ref{Ch4:L2})). Hence, $G(n,r)$ has a perfect matching.
\end{proof}

\begin{thm}
Let $n$  and $r$ be even. If $r<2(n+2)/3$ when $n \geq 52$, and $r<n-16$ when $n<52$,
then $G(n,r)$ can always be extended to $G(n,r+1)$.
\end{thm}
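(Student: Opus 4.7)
The plan is to find a perfect matching in the complement $G^{c}$: adjoining such a matching to $G$ increases every vertex's degree by exactly one, yielding a regular graph $G(n,r+1)$. Since $n$ and $r$ are both even, $G^{c}$ is a regular graph on $n$ vertices of odd degree $r' := n-1-r$, so the machinery of lemma (\ref{Ch4:L2}) and corollary (\ref{Ch4:C1}) is available. Moreover, the odd-regularity of $G^{c}$ automatically forces every connected component of $G^{c}$ to have an even number of vertices, which will be needed when we apply the lemma component-wise.

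I would apply lemma (\ref{Ch4:L2}) directly to $G^{c}$ when it is connected, and apply corollary (\ref{Ch4:C1}) when it is not. Two hypotheses must be verified: (i) $r' > 15$, and (ii) $n < 3r' + 7$. Rearranged, (i) is equivalent to $r < n - 16$; in the regime $n < 52$ this is precisely the assumed inequality, while in the regime $n \ge 52$ it follows from $r < 2(n+2)/3$ together with the elementary fact that $2(n+2)/3 \le n - 16$ iff $n \ge 52$. Rearranged, (ii) is equivalent to $r < 2(n+2)/3$: this is the hypothesis when $n \ge 52$, and when $n < 52$ it is implied by $r < n - 16$ because $n - 16 < 2(n+2)/3$ on that range.

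Once (i) and (ii) are in hand, a connected $G^{c}$ admits a perfect matching by lemma (\ref{Ch4:L2}). If $G^{c}$ is disconnected I would invoke corollary (\ref{Ch4:C1}), whose auxiliary hypothesis $r' \ge n/4$ translates to $r \le 3n/4 - 1$ and is implied by either of our two assumed bounds (a short check using $n \ge 28$ in the first regime and $n \le 60$ in the second). In either case $G^{c}$ contains a perfect matching, which we add to $G$ to obtain the desired $G(n,r+1)$. The argument is mostly bookkeeping between the inequalities above; the one mild pitfall I would watch for is that $n < 3r+7$ is a \emph{global} hypothesis of lemma (\ref{Ch4:L2}), so in the disconnected case one truly must route through the corollary rather than apply the lemma directly to $G^{c}$ as a whole.
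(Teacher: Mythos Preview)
Your approach is the same as the paper's: pass to the complement $G^{c}$, which is $r'$-regular with $r'=n-1-r$ odd, verify $r'>15$ and $n<3r'+7$ from the assumed inequalities, and invoke lemma~\ref{Ch4:L2} to produce a perfect matching in $G^{c}$. Your translations between the inequalities are exactly the ones the paper uses.

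The one superfluous step is your connected/disconnected split. Lemma~\ref{Ch4:L2} carries no connectedness hypothesis (its proof rests only on Tutte's theorem, which holds for arbitrary graphs), so once $r'>15$ and $n<3r'+7$ are in hand the lemma applies directly to all of $G^{c}$. The detour through corollary~\ref{Ch4:C1}, and hence the extra verification of $r'\ge n/4$, is unnecessary; the paper simply applies lemma~\ref{Ch4:L2} to $G^{c}$ in one stroke.
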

\begin{proof}
Since $n$  and $r$ are even, $G^c$ is an $r'$-odd regular graph of $n$ vertices, where $r'=n-r-1$. Now, it is sufficient to prove $G^c$ has a perfect matching.\\

\textbf{Case 1:} When $r<2(n+2)/3$ and $n \geq 52,$
\bel{Ch4:Th1:e1}
r<2(n+2)/3 \Rightarrow n< (3r' +7),
\qe
and 
\bel{Ch4:Th1:e2}
n \geq 52 \text{ and } (\ref{Ch4:Th1:e1}) \Rightarrow r'>15.
\qe
Thus, by lemma(\ref{Ch4:L2}), $G^c$ has a perfect matching.\\

\textbf{Case 2:} When $r<n-16$ and $n<52,$
\bel{Ch4:Th1:e3}
r<n-16 \Rightarrow r'>15,
\qe
and 
\bel{Ch4:Th1:e4}
 n< 52  \text{ and } (\ref{Ch4:Th1:e3}) \Rightarrow (3r' +7)>n.
\qe
Thus, by lemma(\ref{Ch4:L2}), $G^c$ has a perfect matching.
\end{proof}

\begin{thm}
\label{Ch4:Th2}
Let $G(n,r)$ be a regular graph, where $n$ and $r$ are even, such that, either
$r<3n/4$ when $n\geq 64$
 or
  $r<n-16$ when $n<64$.
If there exists a complete bipartite subgraph $H$ of $G$, such that, $V(G)=V(H)$,  then $G(n,r)$ can be extended to $G(n,r+1)$.
\end{thm}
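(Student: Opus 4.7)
The plan is to reduce the extension problem to finding a perfect matching in the complement $G^c$, to use the spanning complete bipartite subgraph $H = K_{a,b}$ (with parts $A, B$ of sizes $a, b$) to split $G^c$ into two odd-regular pieces, and then to invoke Lemma~\ref{Ch4:L2} on each piece. As in the preceding theorem, extending $G(n,r)$ to $G(n,r+1)$ is equivalent to finding a perfect matching in $G^c$; since $n$ and $r$ are both even, $G^c$ is $r'$-regular with $r' := n-r-1$ odd.

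Because $H \subseteq G$ covers every $A$--$B$ pair, $G^c$ contains no $A$--$B$ edge, so $G^c$ is the disjoint union of the induced subgraphs $G^c[A]$ and $G^c[B]$. A direct degree count shows that $G^c[A]$ and $G^c[B]$ are each $r'$-regular, on $a$ and $b$ vertices respectively, and a perfect matching in $G^c$ is precisely the union of perfect matchings in these two pieces. It therefore suffices to apply Lemma~\ref{Ch4:L2} separately to each.

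The required hypotheses of the lemma are (i) $a, b$ even, (ii) $r' > 15$, and (iii) $a, b < 3r' + 7$. For (i), since $G^c[A]$ is $r'$-regular on $a$ vertices with $r'$ odd, the handshake sum $ar'$ must be even, forcing $a$ even; symmetrically $b$ is even. For (ii) and (iii), I would first observe that $a, b \leq r$ (a vertex of $A$ must have all $b$ vertices of $B$ among its $r$ neighbours), and then split into the two cases of the theorem. In Case~1 ($n \geq 64$, $r < 3n/4$), the inequality $3n/4 \leq n - 16$ yields $r' > 15$, and $4r < 3n < 3n + 4$ yields $r < 3r' + 7$, so $\max(a,b) \leq r < 3r' + 7$. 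In Case~2 ($n < 64$, $r < n - 16$), $r' > 15$ is immediate, and the integer inequality $r \leq n - 17$ together with $n \leq 63$ yields $4r \leq 3n + 3$, so again $r < 3r' + 7$.

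Lemma~\ref{Ch4:L2} then produces a perfect matching in each of $G^c[A]$ and $G^c[B]$; their union is a perfect matching in $G^c$, and the extension follows. I expect the key conceptual step to be the decomposition observation, which converts the hypothesis (a spanning $K_{a,b}$ in $G$) into two independent instances of Lemma~\ref{Ch4:L2}; everything after that is bookkeeping, with the only subtle point being that one must track the two part sizes $a$ and $b$ individually through both case distinctions in the statement.
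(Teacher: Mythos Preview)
Your argument is correct and follows essentially the same route as the paper: pass to the odd-regular complement $G^c$, use the spanning $K_{a,b}$ to see that $G^c$ breaks into the two $r'$-regular pieces $G^c[A]$ and $G^c[B]$, and then invoke Lemma~\ref{Ch4:L2} on each piece. The only organisational difference is that the paper packages the last step as an appeal to Corollary~\ref{Ch4:C1} (checking $r' > 15$ and $r' \geq n/4$, then using disconnectedness of $G^c$), whereas you bound the part sizes directly by $a,b \le r$ and verify $r < 3r'+7$ in each case; these two computations are equivalent, since $n - (r'+1) = r$.
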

\begin{proof}
 $G^{c}(n,r')$ is  a regular graph, such that, $n$ is even, $r'>15$ is odd, and $r' \geq n/4$. Note that, $G^{c}$ is disconnected. Hence, by the corollary(\ref{Ch4:C1}), $G^{c}$ has a perfect matching. Thus, $G(n,r)$ can be extended to $G(n,r+1)$.
\end{proof}


\section{Regular graphs containing $K_{n_1,n_2}$ or $K_{n/2}$.}

\begin{thm}
If $n$ is even, $r \geq n/2$, and  $G(n,r)$ contains an induced complete bipartite subgraph with the same $n$ vertices, such that, the cardinality of each partition is odd, then it is impossible to extend $G(n,r)$ to $G(n,r+1)$.
\end{thm}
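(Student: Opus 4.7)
The plan is to reduce non-extendability to the non-existence of a perfect matching in the complement, and then exploit the parity of the two parts of the complete bipartite subgraph.

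First I would record the standing equivalence used throughout the paper: extending an $r$-regular graph on $n$ vertices (with $n$ even) to an $(r+1)$-regular one requires adding exactly $n/2$ new edges whose endpoints cover every vertex once, i.e.\ a perfect matching of the complement $G^{c}$. Thus it suffices to show that $G^{c}$ admits no perfect matching.

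Next, let $A, B$ be the two parts of the spanning bipartite subgraph $H \cong K_{n_{1},n_{2}}$, so $|A|=n_{1}$ and $|B|=n_{2}$, both odd, with $n_{1}+n_{2}=n$. Since every pair in $A\times B$ is an edge of $G$ (as $H$ is a complete bipartite subgraph spanning $V(G)$), no $A$-$B$ pair is a non-edge of $G$. Hence $G^{c}$ contains no edge between $A$ and $B$, and splits as the disjoint union
\[
G^{c} \;=\; G^{c}[A]\;\sqcup\; G^{c}[B].
\]
Because $|A|=n_{1}$ is odd, the subgraph $G^{c}[A]$ has an odd number of vertices and therefore cannot possess a perfect matching; the identical argument applies to $G^{c}[B]$ via $n_{2}$ being odd. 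Any perfect matching of $G^{c}$ would restrict to perfect matchings on each of these two vertex-disjoint pieces, which is impossible. Therefore $G^{c}$ has no perfect matching, and consequently $G(n,r)$ cannot be extended to $G(n,r+1)$.

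There is no real technical obstacle here; the whole argument is a one-line parity count after observing that the spanning complete bipartite subgraph forces $G^{c}$ to disconnect along $(A,B)$. The only subtlety worth noting, more for sanity than for the proof, is that the hypothesis $r\ge n/2$ is in fact automatic: every vertex of $G$ already has degree at least $\max(n_{1},n_{2})\ge n/2$ from the edges of $H$ alone, so the regularity assumption is consistent with the presence of such an $H$.
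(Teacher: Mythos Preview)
Your proof is correct and follows essentially the same route as the paper: both observe that the spanning complete bipartite subgraph forces $G^{c}$ to have no $A$--$B$ edges, leaving two odd-order pieces and hence no perfect matching. The only cosmetic difference is that the paper phrases the last step as an invocation of Tutte's theorem with $S=\emptyset$, whereas you argue the parity obstruction directly.
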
 
\begin{proof}  
Since, $G(n,r)$ contains an induced complete bipartite subgraph with the same $n$ vertices, such that, the cardinality of each partition is odd, then
$G^c$  contains two disjoint components, each of them has odd number of vertices. $G^c$ has no perfect matching (by lemma(\ref{tutte})).  
Hence the proof.
\end{proof}

Let $G(V,E)$ be a graph with the vertex set $V(G)$. Let $S$ be a subset of $V(G)$ and $N_{G}(S)$ be the set $\{v : v$ is a neighbour of $w \in S\}$. Now, to prove the next theorem we use Hall's theorem as a lemma.
\begin{lem}
\label{hall:lem1}
  (Hall's theorem). Suppose, $G$ is a bipartite graph with the bi-partitions $A$ and $B$ of the vertex set of $G$.
Now, $G$ has a perfect matching if and only if $|A| = |B|$ and for any subset $S$ of $A$, $|N_{G}(S)| \geq |S|$. \cite{Bondy EtAl}
\end{lem}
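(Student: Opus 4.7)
The plan is to prove both implications, since the forward direction is immediate while the converse requires real work. For necessity, I would note that a perfect matching $M$ saturates both sides so $|A|=|B|$, and for any $S\subseteq A$ the edges of $M$ map $S$ injectively into $N_G(S)$, forcing $|N_G(S)|\geq|S|$.

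For sufficiency I would induct on $n=|A|$. The base case $n=1$ is trivial, since Hall's condition just says $N_G(A)\neq\emptyset$. For the inductive step, I would split on whether Hall's inequality is ever tight on a proper subset.

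In the first case, every nonempty proper subset $S\subsetneq A$ satisfies the strict inequality $|N_G(S)|>|S|$. Then I would pick any $a\in A$, choose a neighbor $b\in N_G(a)$, add the edge $ab$ to the matching under construction, and verify that $G'=G-\{a,b\}$ still satisfies Hall's condition: for $S'\subseteq A\setminus\{a\}$ one has $|N_{G'}(S')|\geq|N_G(S')|-1\geq|S'|$, the strict inequality absorbing the deletion of $b$. Induction on $G'$ then finishes this case. In the second case, there exists a nonempty $S\subsetneq A$ with $|N_G(S)|=|S|$. Setting $T=N_G(S)$, I would apply the inductive hypothesis separately to the bipartite subgraphs $G[S\cup T]$ and $G-(S\cup T)$. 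On $G[S\cup T]$, Hall is inherited trivially because for $S'\subseteq S$ we have $N_G(S')\subseteq T$, so neighborhoods in the induced subgraph coincide with neighborhoods in $G$. On $G-(S\cup T)$, for any $S''\subseteq A\setminus S$ the key computation is $N_G(S\cup S'')=T\cup N_G(S'')$, which combined with $|N_G(S\cup S'')|\geq|S|+|S''|=|T|+|S''|$ yields $|N_G(S'')\setminus T|\geq|S''|$, i.e.\ Hall's condition in the deleted graph. Induction on each side produces matchings whose union is a perfect matching of $G$.

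I expect the tight-set case to be the main obstacle, because it is exactly where Hall's condition must be propagated in two directions at once. The crucial ingredients are the covering identity $N_G(S\cup S'')=N_G(S)\cup N_G(S'')$ and the tightness $|T|=|S|$; without either one the subtraction $|N_G(S'')\setminus T|\geq|S''|$ collapses and the inductive argument breaks down. The strict-inequality case, by contrast, is essentially bookkeeping once one notices that deleting a single vertex from $B$ can cost at most one unit from any neighborhood count.
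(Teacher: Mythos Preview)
The paper does not supply its own proof of this lemma; it merely states Hall's theorem with a citation to Bondy--Murty, treating it as a known result. Your argument is correct and is exactly the standard inductive proof (splitting on whether Hall's condition is tight on some proper nonempty subset) that appears in the cited reference, so there is nothing to compare and nothing to fix.
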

A corollary of Hall's theorem is as follows.
\begin{lem}
\label{hall:lem2}
Any regular bipartite graph $G(n,r)$, where $n$ is even, with $n/2$ vertices in each partition has a complete matching.
\end{lem}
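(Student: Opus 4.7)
The plan is to reduce immediately to Hall's theorem (Lemma \ref{hall:lem1}). Writing $A$ and $B$ for the two parts, the hypothesis gives $|A|=|B|=n/2$, so it only remains to check the neighborhood condition: for every $S\subseteq A$, one has $|N_G(S)|\ge|S|$.

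I would verify this by a one-line double counting using $r$-regularity. Fix $S\subseteq A$, set $T=N_G(S)\subseteq B$, and count the edges of $G$ with an endpoint in $S$. Since every vertex of $S$ has degree $r$ and (by the bipartite structure) all its neighbors lie in $T$, the number of such edges is exactly $r|S|$. On the other hand, each vertex of $T$ has degree $r$ in $G$ and so is incident to at most $r$ edges whose other endpoint lies in $S$, giving an upper bound of $r|T|$ on the same edge set. Combining yields $r|S|\le r|T|$, and dividing by $r$ (the case $r=0$ being trivial since then there are no edges and the empty matching is perfect on a graph with no vertices of positive degree) gives $|S|\le|T|=|N_G(S)|$, which is exactly Hall's condition.

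Applying Lemma \ref{hall:lem1} then yields a perfect matching of $A$ into $B$, i.e.\ a complete matching of $G$, and the proof is complete. There is no real obstacle here: the whole argument is a direct consequence of counting edges between $S$ and $N_G(S)$ two ways, exploiting that regularity forces the bound from each side to involve the same factor $r$.
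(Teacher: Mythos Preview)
Your argument is correct and is exactly the standard derivation the paper has in mind: the paper does not give a proof at all but simply declares the lemma ``a corollary of Hall's theorem,'' and your double-counting verification of Hall's condition is the canonical way to make that corollary explicit.

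One small slip: your parenthetical about $r=0$ is not right. If $r=0$ and $n>0$, the graph has isolated vertices and no perfect matching exists, so the empty matching is certainly not complete. The lemma is tacitly meant for $r\ge 1$ (and is only ever applied in the paper with $r'=n-1-r\ge 1$), so you can simply assume $r\ge 1$ and drop the parenthetical.
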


\begin{lem}
\label{hall:lem3}
If $G(n,r)$ be a regular graph, where $n/2$ is even, $r \geq n/2$, and $K_{n/2}$ is a subgraph of $G(n,r)$, then $G^{c}$ is bipartite.
\end{lem}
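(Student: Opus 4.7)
The plan is to produce an explicit bipartition of $V(G)$ certifying that $G^{c}$ is bipartite. Let $A \subseteq V(G)$ be the vertex set of the given $K_{n/2}$-subgraph and put $B := V(G) \setminus A$, so $|A| = |B| = n/2$. Since every pair of vertices in $A$ is adjacent in $G$, no pair of vertices in $A$ is adjacent in $G^{c}$; that is, $A$ is independent in $G^{c}$. The problem therefore reduces to showing that $B$ is likewise independent in $G^{c}$, which is equivalent to proving $G[B] \cong K_{n/2}$.

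To establish this, I would run a simple double edge count. Each vertex $v \in A$ already has all $n/2 - 1$ other vertices of $A$ as neighbours and has total degree $r$ in $G$, so exactly $r - (n/2 - 1)$ of its neighbours lie in $B$. Summing over $A$ gives
\[
|E_{G}(A,B)| \;=\; \tfrac{n}{2}\bigl(r - \tfrac{n}{2} + 1\bigr),
\]
which is nonnegative by the hypothesis $r \geq n/2$. On the other hand, regularity and the partition of incidences in $B$ yield
\[
2\,|E(G[B])| + |E_{G}(A,B)| \;=\; \sum_{v \in B}\deg_{G}(v) \;=\; \tfrac{n}{2}\,r.
\]
Solving forces $|E(G[B])| = \binom{n/2}{2}$, the maximum possible for a simple graph on $n/2$ vertices, so $G[B]$ must be $K_{n/2}$.

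With both $A$ and $B$ now shown to span cliques in $G$, each of them is an independent set in $G^{c}$, and every edge of $G^{c}$ crosses between them. This is exactly the bipartition needed. The only substantive step is the edge count that forces $G[B]$ to be complete; once that is in hand, bipartiteness of $G^{c}$ is automatic. I note in passing that the parity hypothesis ``$n/2$ is even'' does not appear to be required for this lemma itself; presumably it is needed in the subsequent applications to matchings via Lemma \ref{hall:lem2}.
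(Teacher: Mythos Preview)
Your proof is correct and is essentially the same as the paper's: both arguments perform an edge count on the partition $A\cup B$ (with $A$ the vertex set of the given $K_{n/2}$) to conclude that $|E(G[B])|=\binom{n/2}{2}$, hence $G[B]\cong K_{n/2}$, whence $G^{c}$ is bipartite with parts $A$ and $B$. Your presentation is a bit more explicit, and your observation that the hypothesis ``$n/2$ is even'' is not actually used in this lemma is also correct.
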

\begin{proof}   
%
The total number of edges attached to the set of vertices $\{v_{1},v_{2},...,v_{n/2}\}$, which form a subgraph $K_{n/2}$, is $(n/4)(n/2-1)+n/2[r-(n/2-1)]$. The number of the remaining edges is 
$(n/4)(n/2-1)$. 
Hence, all other vertices, $v_{(n/2)+1}, v_{(n/2)+2}, ..., v_{n}$, are  connected to each other and form another subgraph $K_{n/2}$. 
Thus, $G^{c}$ is bipartite with the bipartition: $\{v_{1},v_{2},v_{3}, ..., v_{n/2}\}$ and $\{v_{(n/2)+1},v_{(n/2)+2},v_{(n/2)+3}, ..., v_{n}\}$.
\end{proof}

\begin{thm}
If $G(n,r)$ be a regular graph, where $n$ is even, $r \geq n/2$, and $K_{n/2}$ is a subgraph of $G(n,r)$, then $G(n,r)$ can be extended to $G(n,r')$ for any $r' \leq n-1$.
\end{thm}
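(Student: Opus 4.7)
The plan is to iteratively add perfect matchings from the complement, raising the regularity by one at each step, until the desired value $r'$ is reached.

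To start, I would apply Lemma~\ref{hall:lem3}: because $K_{n/2}$ is a subgraph of $G$ and $r \geq n/2$, the complement $G^c$ is bipartite with parts $A = \{v_{1},\dots,v_{n/2}\}$ and $B = \{v_{(n/2)+1},\dots,v_{n}\}$. Since $G$ is $r$-regular, $G^c$ is $(n-1-r)$-regular, and in particular it is a regular bipartite graph with $n/2$ vertices in each part. By Lemma~\ref{hall:lem2}, $G^c$ contains a perfect matching $M$. Let $G_1 := G \cup M$; this is an $(r+1)$-regular graph on the same vertex set, and since every edge of $M$ lies between $A$ and $B$, the original $K_{n/2}$ on $A$ (as well as the complementary $K_{n/2}$ on $B$ exhibited in the proof of Lemma~\ref{hall:lem3}) is still a subgraph of $G_1$. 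Therefore $G_1$ satisfies the hypotheses of the theorem with $r$ replaced by $r+1$.

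Iterating this construction $r'-r$ times yields a chain $G = G_0 \subset G_1 \subset \cdots \subset G_{r'-r}$ of regular graphs on $V(G)$ with $G_i$ being $(r+i)$-regular, and $G_{r'-r}$ is the required $G(n,r')$. The process is valid at each step because the regularity stays strictly less than $n-1$ until the final iteration (using $r' \leq n-1$), so the complement is never empty and Lemma~\ref{hall:lem2} always applies; when we finally reach regularity $n-1$ the graph is $K_n$ and no further edges are needed.

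The only point that requires care, and arguably the main conceptual obstacle, is verifying that the bipartite structure of the complement and the presence of $K_{n/2}$ persist through each step. Both are immediate: adding a matching that lies between $A$ and $B$ only deletes edges from $G^c$ that were already between $A$ and $B$, so the successor complement remains bipartite with the same bipartition; and the $K_{n/2}$ copies on $A$ and on $B$ are untouched since we add no intra-part edges. Hence the induction goes through and produces the desired $G(n,r')$.
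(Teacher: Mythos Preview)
Your proposal is correct and follows essentially the same approach as the paper: show $G^{c}$ is a regular bipartite graph with parts of size $n/2$ (Lemma~\ref{hall:lem3}), extract a perfect matching (Lemma~\ref{hall:lem2}), add it to $G$ to raise the regularity by one, and iterate. Your write-up is slightly more explicit than the paper's in verifying that the $K_{n/2}$ subgraph and the bipartite structure of the complement persist after each matching is added, but the underlying argument is identical.
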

\begin{proof} $G^{c}$ is a bipartite (with  $n/2$ vertices in each partition) and regular graph (by lemma (\ref{hall:lem3})). Thus, $G^{c}$  has a complete matching (by lemma (\ref{hall:lem2})), say $M_{1}$. 
Clearly, $[G^{c}-M_{1}]^{c}$ is a regular graph with $n$ vertices, $r+1$ regularity, and has a subgraph $K_{n/2}$. Hence, $G^{c}-M_{1}$ also has a complete matching, say $M_{2}$. 
Similarly, $G^{c}-M_{1}- M_{2}$, which is a  regular bipartite graph,  has a complete matching. 
We can repeat 
this process until we delete $n-1-r$ complete matchings. This implies, it is possible to extend $G(n,r)\rightarrow [G^{c}-M_{1}]^{c} \rightarrow [G^{c}-M_{1}-M_{2}]^{c} \rightarrow ....\rightarrow [G^{c}-M_{1} -M_{2}-...-M_{n-1-r}]^{c}$,
i.e., $G(n,r)\rightarrow G(n,r+1)\rightarrow ... \rightarrow G(n,n-1)$.
\end{proof}

\section{Discussion}
In this article, we have studied  several cases, where one can or can not extend $G(n,r)$ to $G(n, r+1)$ by adding edges. These cases  depend on the
  structural features of the graph, such as order, regularity, presence of specific subgraphs. 
We have attempted to find a better upper bound of $r$, for which, all $G(n,r)$ can be extended to $G(n,r+1)$. 
Moreover, we have also shown a few results on the perfect matching in odd regular graphs.

%
%


\begin{thebibliography}{12} 

\bibitem {Biggs} Biggs, N. L. : Algebraic Graph Theory (2nd ed.), Cambridge Mathematical Library, pp. 180–190, ISBN 0-521-45897-8, 1993.

\bibitem {Bondy EtAl} Bondy, J.A.; Murty, U.S.R. :  Graph theory with applications,Springer, November, 2010.

\bibitem {Bollobas} Bollob\'as, B. :  Random Graphs, 2nd edition, Cambridge University Press, section 2.4: Random Regular Graphs,  2001.

\bibitem {Bullmore EtAl} Bullmore, E.; Sporns, O. :  Complex brain networks: graph theoretical analysis of structural and functional systems, Nature Reviews Neuroscience 10, 186-198, March, 2009.

\bibitem {Cvetkovic EtAl} Cvetkovi\'c, D. M.; Doob, M.; Sachs, H. :   Spectra of Graphs: Theory and Applications, 3rd rev. enl. ed. New York: Wiley, 1998.

\bibitem {Fenner EtAl} Fenner, T. I.; Frieze, A. M. :  Hamiltonian cycles in random regular graphs, Vol 37, no.2, Reprinted from Journal of Combinatorial Theory, October, 1984.

\bibitem {Galvin} Galvin, D. : Counting colorings of a regular graph, arXiv:1205.2718, June, 2012.


\bibitem {Koledin EtAl} Koledin, T.; Stani, Z. :  Regular graphs whose second largest eigenvalue is at most 1, Novi Sad J. Math. Vol. 43, No. 1, 145-153, 2013.

\bibitem {Pach} Pach, János : Thirty Essays on Geometric Graph Theory, XIII, 607 p. 282 illus, 2012.

\bibitem {Petros} Petros, A. :  On maximum matchings in almost regular graphs, arXiv:1202.0681v2, February, 2012.

\bibitem {Sebastian EtAl} Sebastian, M. C.; David, A. G.; Willem, H. H. :  Matchings in regular graphs from eigenvalues,Journal of Combinatorial Theory, Series B, 2009.

\bibitem {Suil EtAl} Suil O, Douglas B. West: Balloons, cut-edges, matching, and total domination in regular graphs of odd degree. Journal of Graph Theory 01/2010; 64:116-131. DOI: 10.1002/jgt.20443





\end{thebibliography}
\end{document}